\newcommand{\bq}{\mathbf q}
\newcommand{\cH}{\mathcal H}
\newcommand{\g}{{\mathfrak{g}}}
\newcommand{\n}{{\mathfrak{n}}}
\newcommand{\cC}{\mathcal C}
\newcommand{\sal}{^{(\al)}}
\newcommand{\sbe}{^{(\al)}}
\newcommand{\ev}{{\rm ev}}
\renewcommand{\sl}{{\mathfrak{sl}}}
\newcommand{\KR}{{{\rm KR}}}
\newcommand{\C}{{\mathbb C}}
\newcommand{\Z}{{\mathbb Z}}
\newcommand{\N}{{\mathbb N}}
\newcommand{\Hom}{{\rm Hom}}
\newcommand{\bt}{{\mathbf t}}
\renewcommand{\bm}{{\mathbf m}}
\newcommand{\bn}{{\mathbf n}}
\newcommand{\bx}{{\mathbf x}}
\newcommand{\bu}{{\mathbf u}}
\newcommand{\blambda}{{\boldsymbol \lambda}}
\newcommand{\bmu}{{\boldsymbol \mu}}
\newcommand{\al}{{\alpha}}
\begin{document}

\title{A pentagon of identities, graded tensor products, and the
  Kirillov-Reshetikhin conjecture}

\author{Rinat Kedem}

\address{Department of Mathematics, University of Illinois\\
  Urbana, IL 61821, USA\\
$^*$E-mail: rinat@illinois.edu}

\begin{abstract}
  This paper provides a brief review of the relations between the
Feigin-Loktev conjecture on the dimension of graded tensor products of $\g[t]$-modules, the Kirillov-Reshetikhin conjecture, the combinatorial ``$M=N$" conjecture, their
proofs for all simple Lie algebras, and a pentagon of identities which
results from the proof.
\end{abstract}

\keywords{Fusion products; KR modules}
\vskip.25in
\centerline{Dedicated to T. Miwa on the occasion of his 60th birthday}

\bodymatter

\section{Introduction}\label{intro}

This paper reviews work which followed\cite{AKS,AK,DFK} the author's fruitful
collaboration with T. Miwa and colleagues
\cite{FKLMM,FJKLM}. This work was inspired by the work of Feigin and Loktev on fusion 
products\cite{FL}. The series of results described here
finally culminated in a proof\cite{AK,DFK} of the Feigin-Loktev conjecture concerning the
graded character of the (non-level restricted) fusion product, in the case of special modules
known as Kirillov-Reshetikhin modules. The purpose of this article is to make clear the sequence of connections and relations between the various results which lead to the proof.

The fusion product character first appeared in the `80's, in the work on the completeness conjecture for Bethe Ansatz states\cite{KR} for the generalized Heisenberg models. 
The completeness conjecture is one version of what later became known as the Kirillov-Reshetikhin conjecture, and involves the first appearance of an object called the
 generalized Kostka polynomial. The (generalized) Kostka number gives the decomposition coefficients of tensor products of KR-modules into irreducible components. Although much work was later published on the subject, the conjecture in its original, combinatorial form --
counting solutions of the Bethe equations -- was only proved to be true
in special cases. In other cases, a similar but not manifestly
positive formula\cite{HKOTY,Nakajima,Hernandez} was shown to hold.

The key to proving both the Kirillov-Reshetikhin conjecture and the Feigin-Loktev conjecture is a
combinatorial identity, the equality of two polynomials in $q$, one
written as an alternating sum, and the other as a sum of positive
terms. The deeper meaning of this identity remains mysterious, but its
proof\cite{DFK} using purely combinatorial means finally implies
several equalities, proving the conjectures above for any simple Lie
algebras. 


\subsection{The objects of interest}
We describe several objects and relations between their dimensions.
(Section 2 contains a fuller discussion of several of these as necessary).

\subsubsection{Kirillov-Reshetikhin modules} These are finite-dimensional modules of the quantum affine algebra $U_q(\widehat{\g})$ or the Yangian $Y(\g)$. Let $\g$ be a simple Lie
algebra of rank $r$ with Cartan matrix $C$. Consider the irreducible,
finite-dimensional $\g$-module $V$ with a highest weight which is a
non-negative multiple of a fundamental weight. The Yangian $Y(\g)$
contains $\g$ as a subalgebra. The irreducible $Y(\g)$-module induced from $V$ is
called a Kirillov-Reshetikhin module \cite{KR}. It is finite-dimensional, and its $\g$-highest weight is that of $V$. In the case where $\g=A_n$, it is equal to $V$ as a $\g$-module. In other cases, the restriction to a $\g$-module may or may not be irreducible, but in that case, $V$ is always a component in the decomposition, with multiplicity 1, and with the highest weight.

Equivalently, one may consider Kirillov-Reshetikhin modules for the
quantum affine algebra $U_q(\widehat{\g})$ and their similar decomposition
into $U_q(\g)$-modules\cite{Chari}. These are also referred to as KR-modules.

We denote the KR-modules by $\KR_{\al,m}(\zeta)$, where $1\leq \al
\leq r$ and $m$ is a positive integer. These have a $\g$-highest
weight of the form $m\omega_\al$, where $\omega_\al$ is a
fundamental weight of $\g$. The parameter $\zeta$ is a complex number
which is called the spectral parameter.

\subsubsection{Chari's graded $\g[t]$-modules}
These are modules of the current algebra $\g[t]$, defined as a
quotient of $U(\n_-[t])$ by an ideal generated by relations\cite{Chari} (see Equations \eqref{chari},\eqref{highest}).  The
relations are the $q\to 1$ limits of the similar relations which hold
in the quantum case for Kirillov-Reshetikhin modules. These modules
also have a $\g$-highest weight equal to a multiple of one of the fundamental
weights of $\g$, as in the quantum algebra case. We denote this module
by $C_{\al,m}(\zeta)$, where the highest weight is again $m \omega_\al$.

\subsubsection{Decomposition of tensor products}
We observe that by general deformation arguments, the dimension of
KR-modules is bounded from above by that of Chari's modules.

More precisely, the decomposition coefficients of the KR-modules, and
therefore their tensor products at generic values of the spectral
parameters, into irreducible $\g$-modules are bounded from above by
those of Chari's modules. That is, choose a sequence of non-negative
integers $\bn=\{n_{m}\sal:\ 1\leq \al\leq r, m>0\}$ and consider the
multiplicities $M^{Y}_{\lambda,\bn}$ and $M^{\g[t]}_{\lambda,\bn}$, defined by
$$
M_{\lambda,\bn}^{Y}=\dim \Hom_{\g} \left(\otimes_{\al,m}
  \KR_{\al,m}^{\otimes n_{m}\sal} , V(\lambda)\right),
$$
where $V(\lambda)$ is the irreducible highest weight $\g$-module with highest weight $\lambda$, and
$$
M_{\lambda,\bn}^{\g[t]}=\dim \Hom_\g \left(\otimes_{\al,m} C_{\al,m}^{\otimes n_{m}\sal} , V(\lambda)\right).
$$
(Here, we omitted the dependence of the modules on the spectral
parameter: We assume that all spectral parameters are taken at generic
values with respect to each other).
Then we have the first inequality:
\begin{equation}
M_{\lambda,\bn}^{Y}\leq M_{\lambda,\bn}^{\g[t]},
\end{equation}
which simply follows by general deformation arguments: Both are defined as quotients by some ideal, and the ideal in the limit $q\to 1$ may be smaller than that for generic  values of $q$.

\subsubsection{The combinatorial KR-conjecture: The $M$-sum
  formula}  This is a conjecture that $M^{Y}_{\lambda,\bn}$ is equal to the number of Bethe vectors.
The generalized, inhomogeneous Heisenberg spin chain has a Hilbert space which is equal, by definition, to the tensor product of Yangian modules,
$$
\mathcal H_\bn = \prod_{\al,m} \KR_{\al,m}^{\otimes n_{m}\sal}.
$$
Again, the modules are taken at generic values of the spectral
parameters, that is, pairwise not separated by an integer. (Note that the model is also well-defined for any other finite-dimensional $Y(\g)$-modules, but no Bethe Ansatz solution is known generically.)

This model has a Bethe Ansatz solution. The completeness conjecture of
Kirillov and Reshetikhin\cite{KR} states that there is a Bethe vector
for each $\g$-highest weight vector in $\mathcal H_\bn$. In particular, there is 
an explicit formula for the number of Bethe vectors, and in fact, the authors wrote down a graded formula (which we now know has a direct interpretation as a grading by the linearized energy function of the model), although at the time, the meaning of the refinement was unknown. For the $\g$-highest
weight $\lambda$, the multiplicity is the number $M_{\lambda,\bn}$
obtained as the $q\to 1$ limit of the following, grading-endowed
formula:\cite{KR}
\begin{equation}\label{Msum}
  M_{\lambda,\bn}(q) = \sum_{\bm} q^{Q({\bm},\bn)}\prod_{\al,i} \left[ \begin{array}{c} m_{i}\sal +P_{i}\sal \\ {m_{i}\sal}\end{array}\right]_q
\end{equation}
where the sum is taken over the non-negative integers 
${\bm} = \{ m_{i}\sal: 1\leq \al \leq r, i\geq 1\}$ such that $\sum_i
m_{i}\sal = m\sal$, where $m\sal$ are integers fixed by the ``zero
weight condition" 
\begin{equation}\label{weight}
\sum_\beta C_{\al,\beta} m\sal = \sum_i n_{i}\sal - \ell\sal,
\end{equation}
$\ell\sal$ being the coefficient of $\omega_\al$ in the weight
$\lambda$. Note that this sum has only a finite number of non-vanishing terms.
Let us define 
$$B_{i,j}^{(\al,\beta)}={\rm sign}({C_{\al,\beta}})\min(|C_{\al,\beta}| j,|C_{\beta,\al}| i)$$
Then the vacancy numbers $P_i\sal$ are defined as
\begin{equation}\label{vacancy}
P_i\sal = \sum_{i} \min(i,j) n_j\sal - (B \bm)_i\sal.
\end{equation}
and the quadratic form $Q(\bm,\bn)$ is
\begin{equation}\label{energy}
Q(\bm,\bn) = \frac{1}{2} \bm\cdot \mathbf P.
\end{equation}

The $q$-binomial coefficient is defined as usual, and in the limit
$q\to 1$ becomes the usual binomial coefficient. In particular, {\em the
sum is taken over the restricted set of integers $\bm$ such that
$P_{j}\sal\geq 0$.}

This provides a formula for the tensor multiplicities
$M_{\lambda,\bn}^{Y}$. It was proved in several special cases using
combinatorial means\cite{KR,KKR,KSS02,Sch05,OSS03,SS06}. In general, a
similar but not equivalent formula was known to be true, as explined
below.

\subsubsection{The HKOTY $N$-sum formula}
For general Lie algebras, and for generic $\KR$-modules, the following
formula was conjectured\cite{HKOTY}:
\begin{equation}\label{Nsum}
M_{\lambda,\bn}^{Y} =\lim_{q\to 1} N_{\lambda,\bn}(q),
\end{equation}
where $N_{\lambda,\bn}(q)$ is a modified form of the
formula \eqref{Msum}, obtained by simply removing the restriction $P_{j}\sal\geq
0$. Both the usual and the $q$-binomial coefficients are defined when
$P_j\sal<0$, but they carry a sign in that case.
The authors conjectured, after extensive testing, that all
terms coming from sets $\bm$ such that $P_j\sal<0$ for some $j,\al$
cancel, so that 
\begin{equation}\label{HKOTY}
M_{\lambda,\bn}(q)=N_{\lambda,\bn}(q).
\end{equation}

The conjecture \eqref{Nsum} holds provided that the
so-called $Q$-system \cite{KR} is satisfied by the characters of
KR-modules. It was shown by Nakajima (for simply-laced algebras)
\cite{Nakajima} and Hernandez for all other Lie algebras \cite{Her}
that the $q$-characters of KR-modules satisfy the more general
$T$-system \cite{KNS}, from which the $Q$-system follows. Hence, Equation
\eqref{Nsum} had achieved the status of a Theorem.

\subsubsection{Feigin-Loktev fusion products} The Feigin-Loktev fusion
product is a graded $\g[t]$-module \cite{FL}, which is a refinement of the usual tensor product of $\g$-module (cyclic, finite-dimensional). One chooses a
finite-dimensional cyclic $\g$-module $V$, from which one induces an
action of the current algebra $\g[t]$ localized at some complex number
$\zeta$. More specifically, one defines a graded tensor product by
choosing $N$ $\g$-modules $V_i$, each with a cyclic vector $v_i$,
localized at $N$ distinct points in $\C P$. One then defines the
fusion product as the associated graded space of the filtered space, generated by the action of $U(\g[t])$ on
the tensor product of cyclic vectors, with the grading defined by
degree in $t$. This is a graded space, and the graded components are
$\g$-modules.

Feigin and Loktev conjectured that the fusion product as a graded
space is independent of the localization parameters for sufficiently
well-behaved $\g$-modules. Moreover, they conjectured a relation
between the graded coefficients of the $\g$-module $V(\lambda)$ in the
fusion product, and the generalized Kostka polynomials\cite{KirillovShi}. This
conjecture was proved for $\sl_2$ in \cite{FJKLM}, and in greater
generality in several other works.

In particular, in \cite{AK}, we proved the following inequality, using
techniques generalized from \cite{FJKLM}. Let $\mathcal F_\bn^*$ be
the fusion product of the modules $C_{\al,m}$ with multiplicity
$n_m\sal$. This is a graded space. We define the $q$-dimension to be
the Hilbert polynomial of the graded space. Then
\begin{equation}\label{fusion}
q\hbox{-}\dim\Hom_\g \left( \mathcal F_\bn^*, V(\lambda)\right) \leq M_{\lambda,\bn}(q),
\end{equation}
where $M_{\lambda,\bn}$ is the fermionic formula of Kirillov and
Reshetikhin for the number of Bethe vectors in the generalized,
inhomogeneous Heisenberg spin chain corresponding to KR-modules
$KR_{\al,m}$ with the same multiplicities.

\begin{remark}The inequality in \eqref{fusion} arises from the following sequence of maps: One may completely characterize the dual space of functions of the fusion product in terms of symmetric functions with certain zeros and poles (we do this in Section 3). Actual calculation of the Hilbert polynomial of this space requires another injective mapping into another filtered space, whose Hilbert polynomial is the polynomial $M_{\lambda,\bn}(q)$. We do not prove the surjectivity of the map, resulting in the inequality in Equation \eqref{fusion}. 
\end{remark}

Moreover, the space $\mathcal F_\bn^*$ is the associated graded space
of the tensor product of Chari modules, which are defined as a
quotient of $U(\g[t])$ by a certain ideal. Again, by a general
deformation argument, we have that
$$
\dim \Hom_\g (\otimes C_{\al,i}^{\otimes n_{i}\sal} , V(\lambda)) \leq \dim \Hom_\g 
(\mathcal F^*_\bn,V(\lambda)).
$$

Note that the sum on the right hand side of Equation \eqref{fusion} is
manifestly positive, and therefore if, in the $q\to 1$ limit, it is
equal to the tensor product multiplicity, then we have the equality of
graded spaces also, since the left-hand side has a dimension which is
greater than or equal to the tensor product multiplicity by the
deformation argument.

\subsection{A pentagon of identities}
We have a sequence of identities and inequalities:
\begin{eqnarray*}
&&\hbox{ $\left|\Hom_\g\left(\underset{\alpha,m}{\otimes} C_{\alpha,m}^{\otimes n_{m}\sal}~,~V(\lambda)\right)\right|$ \hbox{\raisebox{20pt}{
\rotatebox[origin=c]{45}{$\leq$}
\hbox{\raisebox{20pt}{$\left|\Hom_\g\left(\mathcal F^*_\bn~,~V(\lambda)\right)\right|$}}}}
\hbox{\raisebox{20pt}{\rotatebox[origin=c]{-45}{$\leq$}}} $M_{\lambda,\bn}$} \\
&&\hbox{\hskip1in\rotatebox[origin=c]{90}{$\leq$}}\hskip 2.5in
\hbox{ \rotatebox[origin=c]{90}{$=$}$\leftarrow$final step}\\
&&\hbox{\hskip1cm $\left|\Hom_{U_q(\g)}\left(\underset{\alpha,m}{\otimes} KR_{\alpha,m}^{\otimes n_{m}\sal}~,~V(\lambda)\right)\right|$ } \hskip 1.cm \hbox{$=$} \hskip 1cm \hbox{$N_{\lambda, \bn}$}
\end{eqnarray*}


The ``final step" remaining in this pentagon was to prove the conjectured
identity \eqref{HKOTY}. The proof turns all the inequalities in the pentagon to equalities. This conjecture was proven by combinatorial
means\cite{DFK} for all simple Lie algebras.  Therefore, this provides a proof of
the completeness problem in the Bethe Ansatz known as the
Kirillov-Reshetikhin conjecture, as well as the Feigin-Loktev
conjecture for the cases of Kirillov-Reshetikhin conjecture.

\subsection{Plan of the paper}
In the following sections, we will summarize the proof\cite{AK} of the
inequality 
\eqref{fusion} and the proof of the $M=N$ conjecture\cite{DFK}. In Section
2, we give a definition of the Feigin-Loktev fusion product of Chari's
modules.  In Section 3, we summarize the proof of the inequality \eqref{fusion}
which is obtained via a functional realization of the multiplicity
space, following the ideas of B. Feigin. In Section 4, we explain the
combinatorial proof of the $M=N$ conjecture \cite{DFK}.



\section{Definitions}\label{defs}
Here, we add some details to the definitions of the representation-theoretical objects which are important in the theorems below.

Let $\g$ be a simple Lie algebra of rank $r$ and Cartan matrix
$C$. Let $\g[t]=\g\otimes \C[t]$ be the corresponding Lie algebra of
polynomials in $t$ with coefficients in $\g$.

\subsection{Finite-dimensional $\g[t]$-modules and the fusion action}
Given a complex number $\zeta$, any $\g$-module $V$ can be extended to a
$\g[t]$-module evaluation module $V(\zeta)$, with $t$ evaluated at $\zeta$.
The generators $x[n]:= x\otimes t^n$ ($x\in\g$) act on $v\in V$ as
$\pi(x[n]) v = \zeta^n x v.$

The dimension of the evaluation module is the same as that of $V$.
If $V$ is irreducible as a $\g$-module, so is $V(\zeta)$.

More generally, given a $\g[t]$-module $V$, the $\g[t]$-module
localized at $\zeta$, $V(\zeta)$, is the module on which $\g[t]$ acts
by expansion in the local parameter $t_\zeta := t-\zeta$. If $v\in
V(\zeta)$, then
$$x[n] v = x\otimes (t_\zeta + \zeta)^n v = \sum_j {n \choose j}
\zeta^j x[n-j]_\zeta v,$$ where $x[n]_\zeta := x\otimes t_\zeta$ and
$x[n]_\zeta$ acts on $v\in V(\zeta)$ in the same way that $x[n]$ acts
on $v\in V$.

Another way to write this is in terms of generating functions, for any $x\in\g$,
$$x(z) = \sum_{n\in\Z} x[n] z^{-n-1}.
$$
Then if $\zeta\in\C$, 
\begin{equation}\label{localaction}
x[n]_\zeta = \frac{1}{2\pi i} \oint_{z=\zeta} (z-\zeta)^n x(z) dz.
\end{equation}
We will also need to be able to localize modules at infinity. In that case,
\begin{equation}\label{inftyaction}
x[n]_\infty = \frac{1}{2\pi i} \oint_{z=\infty} z^{-n} x(z) dz =
\frac{-1}{2\pi i}\oint_{z=0} z^{n-2} x(z^{-1}) dz.
\end{equation}

An evaluation module $V(\zeta)$ is a special case of a localized
module, on which the positive modes $x[n]_\zeta$ with $n>0$ and
$x\in\g$ act
trivially.

Let $V$ be a cyclic $\g[t]$-module with cyclic vector $v$. Then $V$ is
endowed with a $\g$-equivariant grading inherited from the grading of
$U:=U(\g[t])$. The filtred
components of $V$ are $\mathcal F(n) = U^{\leq n}v$, where $U^{\leq
  n}$ is the subspace of homogeneous degree in $t$ bounded by $n$. The
grading on $V$ is the associated graded space of this filtration, $\underset{n\geq 0}{\oplus}\mathcal F(n)/\mathcal F(n-1)$. 

As the filtration is $\g$-equivariant, the graded components are $\g$-modules.

\subsection{Chari's KR-modules of $\g[t]$}
A special case of the construction described in the previous subsection is given as follows.
Consider $\g[t]$-modules with a highest weight $m \omega_\al$ ($m\geq
0$ and $\omega_\al$ a fundamental $\g$-weight) defined as the cyclic
module generated by a highest weight vector $v$, with relations
\begin{eqnarray}
x[n]_\zeta v &=& 0 \quad \hbox{if $x\in \n_+$ and $n\geq 0$};\nonumber
\\
h_\beta[n]_\zeta v &=& \delta_{n,0} \delta_{\al,\beta} m v;\nonumber\\ 
f_\beta[n]_\zeta v &=& 0 \quad \hbox{if $n\geq \delta_{\al,\beta}$};\label{highest}\\
f_\al[0]_\zeta^{m+1} v &=& 0.\label{chari}
\end{eqnarray}
We refer to these modules as $C_{\al,m}(\zeta)$\cite{Chari}. Their
graded version has been previously considered by Chari and Moura\cite{ChariMoura}. The graded components of the associated graded space corresponding to the filtration by homogeneous degree are, of course, $\g$-modules.

Except in the case of $\g=A_r$,
these modules are not necessarily irreducible under restriction to the action of $\g$. However,
$C_{\al,m}(\zeta)$ does have a highest weight component isomorphic to the $\g$-module $V(m
\omega_\al)$, which appears with multiplicity 1, all other components
having a smaller highest weights in the total ordering.

It has not been directly proven (except in special cases) that these
modules have the same $\g$-decomposition as the Yangian KR-modules, but this
theorem will follow from the proof of the Feigin-Loktev conjecture below.

\subsection{Fusion products and the Fegin-Loktev conjecture}
Consider a set of cyclic $\g[t]$-modules $\{V_1(\zeta_1),...,V_N(\zeta_N)\}$ localized at pairwise distinct points in $\C$, $\{\zeta_1,...,\zeta_N\}$. Denote the chosen cyclic vector of  $V_i(\zeta_i)$ by $v_i$. If $V_i(\zeta_i)$ are finite-dimensional, so is the space $U(\g[t]) v_1\otimes \cdots \otimes v_N$. Moreover, it has a finite filtration by homogeneous degree in $t$.
The Feigin-Loktev fusion product\cite{FL} is the associated
graded space of this filtration. We denote the fusion product
by $\mathcal F^*_{\mathbf V}$. As the grading of $\mathcal F^*_{\mathbf V}$ is $\g$-equivariant, the
graded components are $\g$-modules. The graded multiplicity of the
irreducible $\g$-module $V(\lambda)$ in the fusion product is a certain polynomial generating function for the multiplicities in the graded components. 

The Feigin-Loktev conjecture is that this polynomial is independent of
the localization parameters $\zeta_i$ for sufficiently well-behaved $\g$-modules,
and that in the case that $V_i$ are KR-modules, the graded
multiplicity of $V(\lambda)$ is equal to the
$M$-sum formula \eqref{Msum}. The equality was proven for $\sl_2$-modules in
\cite{FJKLM} and for symmetric power representations of $\sl_n$ in
\cite{Ke2}. 

In this paper, we consider only fusion products of KR-modules. They
are generated by the highest weight vector $v$ localized at $\zeta$
and the relations are those in \eqref{chari}.
Let $\mathbf V = (V_1,...,V_N)$ be a collection of KR-modules of
$\g[t]$ localized at distinct complex numbers $\boldsymbol\zeta =
(\zeta_1,...,\zeta_N)$. 

We parametrize the collection $\mathbf V$ by their highest weights
$\mathbf n=(n_{j}\sal:\ 1\leq \al\leq r, j\geq 0)$, meaning that in
$\mathbf V$ there are exactly $n_{j}\sal$ KR-modules with highest
weight $j \omega_\al$. We call this fusion product $\mathcal F_\bn^*$.

\section{Functional realization of fusion spaces}
We make use of the fact that $\g[t]\subset \widehat{\g}$, therefore
given a $\g[t]$-module $V$, we can consider the integrable modules induced at some fixed integer level $k$.
We choose $k$ to be sufficiently large, so that the tensor products we consider below have
a decomposition determined by the Littelwood-Richardson rule rather than the Verlinde rule.
We choose integrable $\widehat{\g}$-modules since they have the property that they are 
completely reducible. 
Note that smaller
values of $k$ are also of interest, and computing the graded fusion product
at finite $k$ is still an open problem for the most part.

Consider the action of products of (generating functions of) elements of the
affine algebra $\widehat{\g}$ on the tensor product of highest weight
vectors $v_i$ of KR-modules localized at distinct points $\zeta_i$.
We use the generating fuctions $f_{\al}(t) := \sum_{n\in\Z} f_{\al}[n]t^{-n-1}$, where $f_\al$ is
the element of $\n_-$ corresponding to the simple root $\al$.
We define $\mathcal F^*_{\lambda,\bn}=\Hom(\mathcal
F^*_{\bn},V(\lambda))$, where $\bn$ parametrizes the set of KR-modules
in the fusion product. 

The dual space of $\mathcal F^*_{\lambda,\bn}$ 
is the associated graded space of $\mathcal C_{\lambda,\bn}$,
consisting of all correlation functions the form
\begin{equation}\label{cf}
\langle u_\lambda | f_{\al_1}(t_1) \cdots f_{\al_M}(t_M) | v_1\otimes
\cdots \otimes v_N\rangle
\end{equation}
Here,
$M\geq 0$ and $\boldsymbol \al = (\al_1,...,\al_M)\in I_r^M$ where
$I_r=[1,...,r]$.  The action of the currents is the fusion action
of the previous section. The vector $u_\lambda$ is the
lowest weight vector of the module $V$ localized at $\zeta=\infty$,
dual to the highest weight module localized at $0$ with $\g$-highest
weight $\lambda$.

This space has a filtration by the homogeneous degree in $t_j$,
and its associated graded space is the graded multiplicity space of
the $\g$-module
$V(\lambda)$ in the fusion product $\mathcal F^*_{\bn}$.

\subsection{Characterization of functions in $\cC_{\lambda,\bn}$}
We now fix $\lambda$ and $\bn$, and characterize functions in the space
$\cC_{\lambda,\bn}$ according to their symmetry, pole and zero structure\cite{AK}:
\begin{enumerate}
\item {\bf Zero weight condition:} The correlation function \eqref{cf}
  is $\g$-invariant. Therefore it must have total $\g$-weight equal to
  0, which means that
\begin{equation}\label{zeroweight}
0=\ell\sal + \sum_{\beta,j} C_{\al,\beta}m_j^{(\beta)}- \sum_{j}j
n_{j}\sal,\quad 1\leq \al\leq r,
\end{equation}
where $\lambda =\sum_\al \ell\sal \omega_\al$. This fixes $\{m^{(1)},...,m^{(r)}\}$.

For convenience, we rename the variables  to keep
track of the root $\al$ of the generating function in which they
appear. Thus, we have functions in the variables $\{t_1,...,t_M\}=\{t_i^{(\al)}:\
\al\in I_r, 1\leq i \leq m\sal\}$, where $m\sal$ is the number of
generators with root $\al$.
Note that the space $\mathcal C_{\lambda,\bn}$ is the direct sum of spaces of the with fixed
$\bm=(m^{(1)},...,m^{(r)})$.

\item {\bf Pole structure:} Functions in $\mathcal C_{\lambda,\bn}$ have at most a simple pole when
  $t_i^{(\al)}=t_j^{(\beta)}$ if $C_{\al,\beta}<0$. This is due to the
  relations in the algebra, which, in the language of generating functions,
  means that $f_\al(t) f_\beta(u) \sim
  \frac{f_{\al+\beta}(t)}{t-u} + \hbox{non-singular terms}$.
We are therefore led to define the less singular function $g(t)$ for
each $f(t)\in \cC_{\lambda,\bn}$:
\begin{equation}\label{g}
  g(\mathbf t) := \prod_{\al<\beta, C_{\al,\beta}<0}\prod_{i,j}
  (t_i^{(\al)}-t_j^{(\beta)}) f(\mathbf t),\quad f(\mathbf t)\in
  \cC_{\lambda,\bn}.\end{equation}
\item {\bf Symmetry:} 
The function $g(\mathbf t)$ is symmetric under the exchange
$t_i^{(\al)}\leftrightarrow t_j^{(\al)}$. This is due to the fact
that $[f_\al(t_1),f_\al(t_2)]=0$.
\item {\bf Serre condition:} Let $\al,\beta$ be simple roots such that
  $C_{\al,\beta}<0$ and define
  $m_{\al,\beta}=1-C_{\al,\beta}$. Then there is a Serre relation in $\g$
  (hence a corresponding relation in $\widehat{\g}$) of the form
${\rm ad}(f_\al)^{m_{\al,\beta}} f_\beta=0$. In generating function
language, 
$$
f_\al(t_1^{(\al)})\cdots f_\al(t_{m_{\al,\beta}}^{(\al)})t_\beta(t_1^{(\beta)})
$$
has no singularity when all the variables are set equal to each other.
This implies that
the function $g(\bt)$ of \eqref{g} has the following vanishing property:
$$g(\mathbf t)\Big|_{t_1^{(\al)}=\cdots = 
  t_{m_{\al,\beta}}^{(\al)}=t_{1}^{(\beta)}}=0.$$ 
This cancels out the pole which would otherwise appear in the function
$f(\bt)$.

\item {\bf Degree restriction:} As $u_\lambda$ is a lowest weight
  vector of the module localized at infinity, positive currents
  $f_\al[n]_\infty$ with $n\geq 0$ act on it trivially. The action is
  given by taking the contour integral at infinity (see Equation
  \eqref{inftyaction}), or equivalently, a residue taken at 0. That
  is, there should be no residue when integrating $t^{n-2} f(t^{-1})$, with
  $n\geq 0$, at $t=0$.
This
  gives a degree restriction on the function $f(\mathbf t)\in
  \cC_{\lambda,\bn}$ for each of the variables:
$$
{\rm deg}_{t_i\sal} f(\mathbf t)\leq -2.
$$
\item {\bf Poles at $\zeta_i$:} The relation \eqref{highest} implies
  that $f(\mathbf t)\in \cC_{\lambda,\bn}$ may have a simple pole at
  $t_i\sal=\zeta_j$ only if the highest weight of the module localized at
  $\zeta_j$ is a multiple of $\omega_\al$, in accordance with the Equation
  \eqref{localaction}. Otherwise, $f[n]_{\zeta_i} v(\zeta_i)=0$ if
  $n\geq 0.$ Moreover, we have
\item {\bf Integrability condition:} We assume each module $V_k$ has highest
  weight $\ell_k \omega_{\al_k}$. The relation \eqref{chari} requires
  that $g_2(\bt) = \left(\prod_{\al,j,k} (t_j\sal-\zeta_k)^{\delta_{\al,\al_k}}\right)
g(\bt)$ has the following vanishing property:
$$
g_2(\bt)\Big|_{t_1\sal = \cdots = t_{\ell_k+1}\sal=\zeta_k}=0.
$$

\end{enumerate}

These conditions characterize the space $\cC_{\lambda,\bn}$ completely.
The only difficulty is to compute its Hilbert polynomial.
This is done by introducing another filtration on
the space of functions. The idea for such a filtration was first
introduced by Feigin and Stoyanovsky \cite{FeiginSto}.

\subsection{Filtration of the space of functions}
Let $\blambda = (\lambda^{(1)},...,\lambda^{(r)})$ where $\lambda\sal$
is a partition of $m\sal$. Let $m\sal_a$ denote the number of parts of
$\lambda\sal$ equal to $a$. Thus, $\sum_a a m\sal_a=m\sal$. Fix a
standard tableau for each partition (the result is independent of the
choice of tableaux, and when we discuss a partition below we always refer to
the fixed tableau) and define the evaluation map $\ev_\lambda :
\cC_{\lambda,\bn}[m^{(1)},...,m^{(r)}] \to \mathcal H[\blambda]$,
where $\mathcal H[\blambda]$ is the space of functions in several
variables: one variable for each row of each partition in
$\blambda$. 

The evaluation map is defined as follows. If the letter $i$ appears in
the $j$th row of length $a$ in $\lambda\sal$, then
$\ev_\blambda(t_i\sal) = u\sal_{a,j}$. This is extended by linearity to
$\cC_{\lambda,\bn}$. 

We order multipartitions lexicographically, and define 
$$
\Gamma_\blambda= \underset{\bmu>\blambda}{\cap} \ker\ev_\bmu.
$$
This gives a finite filtration of $\cC_{\lambda,\bn}$, with
$\Gamma_\bmu \subset \Gamma_\blambda$ if $\bmu<\blambda$. We consider the
image of the graded components
$
\Gamma_\blambda/(\Gamma_\blambda\cap \ker \ev_\blambda)
$
under the evaluation map $\ev_\blambda$.

Again, this is a space of functions, isomorphic to a subspace of
$\mathcal H[\blambda]$. Let us denote its image by
$\widetilde\cH[\blambda]$. Its characterization is as follows.

\begin{enumerate}
\item {\bf Symmetry:} Functions in $\Gamma_\blambda$ are symmetric in
  the variables $\{t_1\sal,...,t_{m\sal}\sal\}$ for each $\al$. The
  full symmetry is lost under the evaluation map, but the functions
  are still symmetric with respect to the variables labeled by rows of
  the same length in 
  $\lambda\sal$. That is, they are symmetric with respect to the
  exchange of
  variables  $\{u_{a,1}\sal,...,u_{a,m_a\sal}\sal\}$ for each $a,\al$.

\item Functions in $\Gamma_\blambda$ are in the kernel of any evaluation
$ev_\bmu$ with $\bmu>\blambda$, which means that functions in the
image vanish whenever we set the variables corresponding to different
rows of the same partition equal to each other. In fact, one can prove
that 
\begin{lemma}
Functions in $\widetilde\cH[\blambda]$ have a factor
$(u\sal_{a,j}-u\sal_{a,k})^{2\min(a,b)}$ for all $j<k$. 
\end{lemma}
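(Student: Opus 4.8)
The plan is to isolate the two rows in question, reduce the statement to a divisibility property of a single symmetric polynomial in $2a$ variables, and then produce the factor by a symmetry ``doubling'' argument. Fix the color $\al$ and the two rows $j<k$ of length $a$ in $\lambda\sal$. Under $\ev_\blambda$ these rows carry the $a+a=2a$ original variables $x_1,\dots,x_a$ (all mapping to $u\sal_{a,j}=:u$) and $y_1,\dots,y_a$ (all mapping to $u\sal_{a,k}=:v$). Freezing every other variable of $\bt$ at its $\ev_\blambda$-value (at generic distinct points, so no poles among the remaining variables intervene), a function $\phi\in\Gamma_\blambda$ restricts to a polynomial $G(x_1,\dots,x_a,y_1,\dots,y_a)$ which, by the Symmetry property of $\Gamma_\blambda$ (item~1 above), is symmetric in these $2a$ color-$\al$ variables. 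The evaluated function is $G(u,\dots,u,v,\dots,v)$, and the claim is that it is divisible by $(u-v)^{2a}$.

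The combinatorial input comes from the lexicographic filtration. For each $p\geq1$ let $\bmu$ be obtained from $\blambda$ by replacing the two parts of length $a$ in $\lambda\sal$ by parts of lengths $a+p$ and $a-p$. Since $a+p>a$, we have $\bmu>\blambda$, so $\phi\in\Gamma_\blambda\subset\ker\ev_\bmu$. The map $\ev_\bmu$ merely regroups the same $2a$ variables, setting $a+p$ of them equal to one value and the remaining $a-p$ equal to another; taking $p=1$ and invoking the full symmetry of $\phi$ in the color-$\al$ variables, this says precisely that $G$ \emph{vanishes whenever any $a+1$ of its $2a$ arguments coincide}.

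The crux is to upgrade this order-one vanishing to order two. Let $S$ be any $(a+1)$-element index set and $L_S$ the locus where the corresponding variables all agree. Near a generic point of $L_S$ expand $G$ in transverse coordinates $\xi_i$ ($i\in S$); each homogeneous piece is invariant under the symmetric group $S_{a+1}$ permuting these indices. The constant piece is $G|_{L_S}=0$. The linear piece is an $S_{a+1}$-invariant linear form, hence a multiple of $\sum_{i\in S}\xi_i$; but that direction is tangent to $L_S$ (it moves the common value), along which $G$ is identically zero, so the linear piece vanishes. The quadratic piece is then an $S_{a+1}$-invariant quadratic vanishing on the same diagonal line, and the only such form is proportional to $\sum_{i<j\in S}(\xi_i-\xi_j)^2$. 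Hence $G$ vanishes to order $\geq2$ along $L_S$, with leading transverse term proportional to $\sum_{i<j\in S}(z_i-z_j)^2$. To assemble the factor I would specialize $y_1=\dots=y_a=v$ and set $h(x_1,\dots,x_a):=G(\mathbf x,v,\dots,v)$. For each fixed $i$, putting $x_i=v$ produces the $(a+1)$-fold coincidence $x_i=y_1=\dots=y_a=v$, a point of $L_S$ with $S=\{x_i,y_1,\dots,y_a\}$; by the previous paragraph the leading transverse quadratic specializes to $\sum_{q}(x_i-v)^2=a\,(x_i-v)^2\neq0$, so the order-two vanishing survives and $(x_i-v)^2\mid h$. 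As this holds for every $i$, we get $\prod_{i=1}^a(x_i-v)^2\mid h$, and setting $x_1=\dots=x_a=u$ yields the factor $(u-v)^{2a}$. The identical argument applied to two rows of lengths $a\leq b$ (isolating $a+b$ variables and using that a box-move producing a part of length $b+1$ is lex-larger) gives order $2a=2\min(a,b)$, the general form of the statement.

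I expect the genuine difficulty to be concentrated in the third step: the passage from ``$G$ vanishes when $a+1$ variables coincide'' — an order-one condition obtained cheaply from the lexicographic filtration — to ``$G$ vanishes to order two transverse to the coincidence locus.'' This doubling is exactly what produces the exponent $2\min(a,b)$ rather than $\min(a,b)$, and it rests entirely on the $S_{a+1}$-symmetry killing the linear transverse term; the remaining bookkeeping with the lex order, the evaluation map, and the survival of the leading quadratic under specialization is routine.
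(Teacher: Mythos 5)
Your strategy is the standard one behind this lemma --- the paper itself states it without proof, deferring to \cite{AK}, and the argument there (in the tradition of \cite{FeiginSto,FJKLM}) is essentially the one you outline: full symmetry in the color-$\al$ variables plus vanishing conditions supplied by the lexicographic filtration, with symmetric-group invariance killing the linear transverse term and thereby doubling the zero. Your assembly steps (the product over $i$ of the coprime factors $(x_i-v)^2$, then the specialization $x_1=\cdots=x_a=u$) are also fine; note that you do not even need to identify the leading quadratic with $\sum_{i<j}(\xi_i-\xi_j)^2$: once $G$ lies in the square of the ideal of $L_S$ near a generic point, its restriction to any line transverse to $L_S$ vanishes to order at least $2$, which is all that divisibility requires.

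The one genuine gap is your justification of the key vanishing statement. From the multipartition $\bmu$ obtained by replacing the two length-$a$ rows by $(a+1,a-1)$ (your $p=1$), the condition $\Gamma_\blambda\subset\ker\ev_\bmu$ gives vanishing of $G$ only on the locus where $a+1$ variables take one common value \emph{and the remaining $a-1$ variables take a second common value}; the symmetry of $G$ does not upgrade this to ``any $a+1$ arguments coincide, the rest arbitrary,'' which is the statement your argument actually uses --- both when you expand around a \emph{generic} point of $L_S$ (non-$S$ variables free) and at the end, when you set $x_i=v$ while keeping the other $x_{i'}$ generic. The repair stays entirely inside your framework: take $\bmu$ to replace the two rows by the partition $(a+1,1,\dots,1)$ with $a-1$ singleton parts. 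This is still lexicographically larger than $\blambda$ (the first differing part is $a+1>a$), and its kernel condition is precisely vanishing when $a+1$ chosen variables coincide while each remaining variable is evaluated at its own free value. In the unequal case $a\leq b$ the right choice is $(b+1,1,\dots,1)$ with $a-1$ singletons, giving vanishing whenever $b+1$ variables coincide; with that substitution the rest of your argument goes through and yields the exponent $2\min(a,b)$ as claimed.
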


\item {\bf Pole structure and Serre condition:} The pole at
  $t_i\sal=t_j^{(\beta)}$ when $C_{\al,\beta}<0$, 
  together with the vanishing condition of $g(\bt)$ which follows from
  from the Serre relation, implies that functions in
  $\widetilde\cH[\blambda]$ have a pole of order at most $\min(|C_{\al,\beta}| b,
  |C_{\al,\beta}|a)$ whenever $u_{a,i}\sal=u_{b,j}\sbe$ (inherited
  from conditions (1) and (3) of the previous subsection).
\item {\bf Poles at $\zeta_j$:} The pole at $t_i\sal = \zeta_j$ in
  case $V_j$ has highest weight proportional to $\omega_\al$, together
  with the integrability of that module, translate to the following
  statement for $f\in \widetilde\cH[\blambda]$: There is a pole of
  order at most $\min(\ell,a)$ at $u_{a,i}\sal=\zeta_j$ if $V_j$ has
  a highest weight equal to $\ell_j \omega_\al$. 
We define
  $\delta(j,\al)=1$ if the highest weight of $V_j$ is a multiple of
  $\omega_\al$, and $\delta(j,\al)=0$ otherwise.

\item {\bf Degree restriction:} Functions in $\widetilde\cH[\blambda]$
  have a degree in $u_{a,j}\sal$ which is bounded from above by $-2 a$.
\end{enumerate}

We do not know that these are all the conditions on functions in
$\widetilde\cH[\blambda]$: The map $ev_\blambda$ is injective by definition but not necessarily surjective.
However, we can compute the Hilbert polynomial
of the space $\mathcal F$ defined by the conditions above, which gives
an upper bound on the Hilbert polynomial of $\widetilde\cH[\blambda]$.

To summarize, we know that $f(\bu)\in \mathcal F$ has the form
$$
\frac{\displaystyle \prod_{(a,i)\neq(b,j)}
  (u_{a,i}\sal-u_{b,j}\sal)^{\min(a,b)}\times f^0(\bu)}
{\displaystyle\prod_{a,i,j}(u_{a,i}\sal-\zeta_j)^{\delta(j,\al) \min(\ell_j,a)}
\prod_{a,i,b,j}  \prod_{\al<\beta:C_{\al,\beta}<0}
(u_{a,i}\sal-u_{b,j}\sbe)^{\min(|C_{\al,\beta}|b,|C_{\beta,\al}|a)}
},
$$
where $f^0(\bu)$ is a polynomial in $\bu$, symmetric under the exchange
$u\sal_{a,i}\leftrightarrow u\sal_{a,j}$, of degree such that 
$$
\deg_{u_{a,j}\sal} f(\bu) \leq -2a.
$$
To compute the Hilbert polynomial we set all $\zeta_j=0$ so that the
function above is homogeneous in $\bu$. That is, we compute the
Hilbert polynomial of the associated graded space. It is very important to note that 
the values of $\zeta_j$ do not affect the value of the Hilbert polynomial, that is,
there is no change in the $q$-dimensions of the space when we take the associated
graded space.

The degree in $u_{a,j}\sal$ of the
prefactor of $f^0(\bu)$ is $-2a-P_a\sal$, where $P_a\sal$ is defined
in Equation \eqref{vacancy}. Moreover, the overall homogeneous degree
of the prefactor
is $Q(\bm,\bn)$ as defined in equation \eqref{energy}. The Hilbert
polynomial of the space of symmetric functions in $m$ variables of
degree less than or equal to $p$ is the $q$-binomial coefficient,
$$
\left[ \begin{array}{c} m+p \\ m \end{array} \right]_q =
\frac{\displaystyle \prod_{i=1}^{m+p}(1-q^i)} {\displaystyle
  \prod_{i=1}^{m}(1-q^i)\prod_{i=1}^{p}(1-q^i)} .
$$
Therefore, the Hilbert polynomial of $\mathcal F$ is
$$
q^{Q(\bm,\bn)} \prod_{\al,j}\left[ \begin{array}{c} m\sal_j+p\sal_j \\
    m\sal_j \end{array} \right]_q,
$$
which is the upper bound (at each degree in $q$) of the Hilbert
polynomial of $\widetilde{\cH}[\blambda]$, since it is a polynomial with
positive coefficients.

Summing over the graded components, there follows the main Theorem:
\begin{theorem}\cite{AK}
The Hilbert polynomial of $\cC_{\lambda,\bn}$, which is the
Feigin-Loktev fusion product of KR-modules, is bounded from
above by $M_{\lambda,\bn}(q)$ defined
  in Equation \eqref{Msum}.
\end{theorem}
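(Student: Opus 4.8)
The plan is to bound the graded dimension (Hilbert polynomial) of $\cC_{\lambda,\bn}$ by combining its explicit analytic description with the multipartition filtration $\{\Gamma_\blambda\}$, reducing the final count to a sum of products of $q$-binomial coefficients. The key structural fact is that this filtration is finite and compatible with the $t$-grading, so the Hilbert polynomial is additive over the associated graded pieces.

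First I would record that, since each evaluation map $\ev_\blambda$ is a degree-homogeneous substitution of variables that is injective on the corresponding graded piece by construction, with image $\widetilde\cH[\blambda]$, one has
\begin{equation*}
q\hbox{-}\dim \cC_{\lambda,\bn}=\sum_\blambda q\hbox{-}\dim\big(\Gamma_\blambda/(\Gamma_\blambda\cap\ker\ev_\blambda)\big)=\sum_\blambda q\hbox{-}\dim \widetilde\cH[\blambda],
\end{equation*}
where $\blambda=(\lambda^{(1)},...,\lambda^{(r)})$ runs over multipartitions with $\lambda\sal$ a partition of the number $m\sal$ of color-$\al$ currents fixed by the zero-weight condition \eqref{zeroweight}. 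It therefore suffices to bound $q\hbox{-}\dim\widetilde\cH[\blambda]$ from above for each $\blambda$ separately.

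Next I would use conditions (1)--(5) characterizing $\widetilde\cH[\blambda]$ to embed it into the explicit space $\mathcal F$ of functions of the displayed fractional form, and then strip off the prefactor. Writing $f(\bu)=(\hbox{prefactor})\cdot f^0(\bu)$ with $f^0$ symmetric within each block of variables $\{u_{a,i}\sal\}_i$, the vanishing Lemma controls the numerator factors while the pole bounds control the denominator factors, pinning the degree of the prefactor in $u_{a,j}\sal$ to $-2a-P_a\sal$ and its total homogeneous degree to $Q(\bm,\bn)$. Combined with the degree restriction $\deg_{u_{a,j}\sal}f\leq -2a$, this forces $f^0$ to have degree at most $P_a\sal$ in each of the $m\sal_a$ variables of block $(a,\al)$. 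Since the Hilbert polynomial of block-symmetric polynomials so constrained is a product of $q$-binomials, and the prefactor contributes the factor $q^{Q(\bm,\bn)}$, I obtain
\begin{equation*}
q\hbox{-}\dim\widetilde\cH[\blambda]\leq q^{Q(\bm,\bn)}\prod_{\al,a}\left[\begin{array}{c} m\sal_a+P_a\sal \\ m\sal_a\end{array}\right]_q,
\end{equation*}
with the understanding that a block with $P_a\sal<0$ and $m\sal_a>0$ forces the zero space.

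Finally I would sum over $\blambda$. A multipartition with $\lambda\sal$ having $m\sal_a$ parts of size $a$ corresponds bijectively to a choice of the summation data $m_a\sal:=m\sal_a$ of \eqref{Msum}; under this correspondence the vacancy numbers \eqref{vacancy} and the quadratic form \eqref{energy} match the prefactor degrees computed above, and the blocks with negative vacancy number drop out, reproducing the restriction $P_j\sal\geq 0$. Summing the per-$\blambda$ bounds then yields
\begin{equation*}
q\hbox{-}\dim\cC_{\lambda,\bn}\leq \sum_{\bm} q^{Q(\bm,\bn)}\prod_{\al,i}\left[\begin{array}{c} m_i\sal+P_i\sal \\ m_i\sal\end{array}\right]_q=M_{\lambda,\bn}(q).
\end{equation*}
The main obstacle is precisely the passage from $\widetilde\cH[\blambda]$ into $\mathcal F$: we cannot show that $\ev_\blambda$ surjects onto the space cut out by (1)--(5), nor that (1)--(5) are the complete list of conditions, so only the inequality survives -- this is exactly the gap that the later $M=N$ identity is needed to close. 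Upstream, the genuinely hard input is the vanishing Lemma, together with the bookkeeping that makes the prefactor degrees come out as exactly $-2a-P_a\sal$ and $Q(\bm,\bn)$; granting these, the $q$-binomial count and the final summation are routine.
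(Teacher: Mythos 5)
Your proposal is correct and follows essentially the same route as the paper: the lexicographic filtration $\Gamma_\blambda$ with injective evaluation maps $\ev_\blambda$, the embedding of each image $\widetilde\cH[\blambda]$ into the explicit space $\mathcal F$ cut out by the symmetry/pole/vanishing/degree conditions, the prefactor degree count $-2a-P_a\sal$ and $Q(\bm,\bn)$ yielding the $q$-binomial product, and the final summation over multipartitions reproducing the $M$-sum. You also correctly locate the source of the inequality (non-surjectivity of $\ev_\blambda$ onto $\mathcal F$), which is exactly the gap the paper acknowledges.
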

\section{Proof of the $M=N$ conjecture}
In this section, we explain the proof \cite{DFK} of the identity
\eqref{HKOTY}. For ease of readability, we explain the technique explicitly
for the Lie algebra $\sl_2$, and then state the key ingredients
necessary in the generalization to arbitrary Lie algebras. The only
difficulty in this generalization is the rapid proliferation of indices.

\subsection{The case of $\sl_2$}
As explained in the introduction, one need only prove the $M=N$
identity only for the case $q=1$ for the pentagon of identities to
hold, due to the positivity of the $M$-sum. In the case of $\sl_2$, we drop the root superscript $(\al)$ in the vacancy numbers $P_i\sal$ and so forth.

Fix $\bn=(n_1,...,n_k)\in \Z_+^k$ and an $\sl_2$-highest weight $\ell
\omega_1$ with $\ell\in \Z_+$. Consider the following generating function:
\begin{equation}\label{zfun}
Z_{\ell,\bn}^{(k)}(x_0,x_1) = \sum_{\bm\in \N^k} x_1^{-q_0} x_0^{q_1} \prod_{i=1}^k {m_i + q_i \choose m_i}
\end{equation}
Here, we have defined  
$$q_i = \ell + \sum_{j=i+1}^k (j-i) (2 m_j-n_j),\quad i\geq 0.$$ 
In particular, notice that when $q_0=0$, $q_i=P_i$ for all $i>0$.

The binomial coefficient is defined as usual
$$
{m+p\choose m} = \frac{(m+p)!}{m!p!}.
$$
This is well-defined for both negative and positive values of $p$, and
when $p<0$ it has an overall sign $-1^m$.

This  $N$-sum can be obtained from this generating function as
follows. First, here and below, we note that in the $N$ and $M$-sums,
$m_j=0$ if $j>k$ in \eqref{Msum}. However all the identities we prove
are valid under this restriction; since only a finite number of the
$m_j$ make a non-trivial contribution to the summation \eqref{Msum},
one can take $k\to\infty$ at the end of the day with no loss of generality.

Second, in both the $N$ and $M$ sums, there is a ``weight restriction"
restriction on the $\bm$-summation. This is equivalent to setting
$q=0$, or alternatively, considering only the constant term in $x_1$
in the generating function.  expression. We do not restrict the sum to
$P_i\geq 0$ yet, but in the $M$ and $N$ sums, the variable $x_0$ must
be set to 1.

\begin{lemma}
There is a recursion relation,
$$
Z_{\ell,(n_1,..,n_k)}^{(k)}(x_0,x_1) = \frac{x_1^{n_1+2}}{x_0 x_2} Z_{\ell;(n_2,...,n_k)}^{(k-1)}(x_1,x_2),
$$
where $x_i$ are solutions of the $A_1$ $Q$-system or cluster algebra
mutation\cite{Ke} with arbitrary boundary conditions:
$$
x_{i+1} x_{i-1} = x_i^2-1, \quad i\in \Z.
$$
\end{lemma}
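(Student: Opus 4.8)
The plan is to perform the summation over the first variable $m_1$ explicitly and to recognize the $Q$-system relation as precisely the substitution that closes the recursion. First I would observe that, among all the data entering $Z^{(k)}_{\ell,\bn}(x_0,x_1)$, only $q_0$ depends on $m_1$: since $q_1 = \ell + \sum_{j=2}^k (j-1)(2m_j-n_j)$ and every $q_i$ with $i\geq 1$ omits the $j=1$ term, the binomial factors ${m_i+q_i\choose m_i}$ for $i\geq 2$, the exponent $q_1$ in $x_0^{q_1}$, and the upper index $q_1$ of ${m_1+q_1\choose m_1}$ are all independent of $m_1$. Writing $S := \sum_{j=2}^k(2m_j-n_j)$, a one-line computation gives $q_0 = q_1 + (2m_1-n_1) + S$, so that $x_1^{-q_0} = x_1^{-q_1+n_1-S}\,x_1^{-2m_1}$ and the whole $m_1$-dependence is confined to $x_1^{-2m_1}{m_1+q_1\choose m_1}$.

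The second step is to sum this factor. Reading the binomials as polynomials in their upper index, the standard generating-function identity $\sum_{m\geq 0}{m+q\choose m}z^m = (1-z)^{-q-1}$ holds for every integer $q$; applying it with $z=x_1^{-2}$ gives
$$
\sum_{m_1\geq 0} x_1^{-2m_1}{m_1+q_1\choose m_1} = (1-x_1^{-2})^{-q_1-1} = \frac{x_1^{2q_1+2}}{(x_1^2-1)^{q_1+1}}.
$$
Restoring the prefactor and the $x_0^{q_1}$, each surviving term indexed by $(m_2,\dots,m_k)$ acquires the weight
$$
x_1^{n_1+2}\,\frac{x_0^{q_1}\,x_1^{q_1-S}}{(x_1^2-1)^{q_1+1}}.
$$

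The third step is where the $Q$-system enters. Using the mutation relation at $i=1$, namely $x_1^2-1 = x_0 x_2$, the denominator collapses: $x_0^{q_1}/(x_0 x_2)^{q_1+1} = 1/(x_0\,x_2^{q_1+1})$, so each term becomes $x_1^{n_1+2}\,x_1^{q_1-S}\,x_2^{-q_1-1}/x_0$. I would then identify the residual sum with $Z^{(k-1)}_{\ell;(n_2,\dots,n_k)}(x_1,x_2)$: under the relabeling $m'_i=m_{i+1}$, $n'_i=n_{i+1}$ one checks the index-shift identity $q'_i=q_{i+1}$, so the $(k-1)$-sum carries the factors $x_2^{-q_1}x_1^{q_2}\prod_{i=2}^k{m_i+q_i\choose m_i}$. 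Matching exponents requires $q_1-S=q_2$, which holds because $q_1-q_2=\sum_{j=2}^k(2m_j-n_j)=S$. Pulling out $x_1^{n_1+2}/(x_0 x_2)$ then reproduces the claimed recursion term by term.

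I expect the only genuine subtlety, and the step to state carefully, to be the status of the $m_1$-series: because we are in the unrestricted ($N$-sum) regime the upper index $q_1$ may be negative, so the summation must be justified as an identity of formal power series (equivalently rational functions) in $x_1^{-1}$ rather than a convergent numerical sum, with $(1-z)^{-q-1}$ interpreted uniformly for all integer $q$. Everything else is bookkeeping; the conceptual point I would emphasize is that the cluster/$Q$-system relation $x_1^2-1=x_0 x_2$ is exactly the algebraic input that absorbs the quadratic denominator produced by the geometric summation and lets the recursion close.
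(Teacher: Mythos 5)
Your proposal is correct and follows essentially the same route as the paper's proof: isolate the $m_1$-dependence, sum the series $\sum_{m_1}x_1^{-2m_1}\binom{m_1+q_1}{m_1}=(1-x_1^{-2})^{-q_1-1}$, substitute $x_1^2-1=x_0x_2$ from the $Q$-system, and reindex via $q'_i=q_{i+1}$. Your bookkeeping with $S=\sum_{j\geq 2}(2m_j-n_j)$ is just an unpacked form of the paper's identity $2q_i-q_{i-1}+2m_i-n_i=q_{i+1}$, and your remark on interpreting the binomial sum formally for negative $q_1$ is a legitimate point the paper leaves implicit.
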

\begin{proof}
  The variable $m_1$ is not part of the expression for $q_1$ so we can
  perform the summation over $m_1$, using the identity
$$
\sum_{m_1\geq 0} x_1^{-2m_1} {m_1+q_1\choose m_1} = \left(\frac{x_1^2}{x_1^2-1}\right)^{q_1+1}= \left(\frac{x_1^2}{x_{0}x_{2}}\right)^{q_1+1},
$$
where we have used the $Q$-system in the second equality.

We separate out the dependence on $m_1$ in the summand, and note that
$$2q_{i}-q_{i-1}+2m_i-n_i=q_{i+1}.$$ 
Moreover, if we denote by $q_i^{(j)}$ the function $q_i$ with
arguments being of the last $j-i$ variables in the list
$(m_1,...,m_k)$ (so that $q_i^{(k)}=q_i$), then $q_i^{(k-1)}=q_{i+1}^{(k)}$, or
$q_{i-1}^{(k-1)}=q_i^{(k)}$.

We have
\begin{eqnarray*}
Z_{\ell,(n_1,...,n_k)}^{(k)} (x_0,x_1)&=& \sum_{m_1,...,m_k} x_1^{-q_0} x_0^{q_1} \prod_{j=1}^k {m_j+q_j\choose m_j} \\
& & \hskip-1in
=\sum_{m_2,...,m_k} x_0^{q_1}\prod_{j\geq 2} {m_j+q_j\choose m_j} 
x_1^{n_1+q_2-2q_1}
\sum_{m_1}  x_1^{-2 m_1}{m_1+q_1\choose m_1}   \\
&=& \sum_{m_2,...,m_k} x_0^{-1} x_1^{n_1+q_2+2} x_2^{-1-q_1} \prod_{j=2}^k {m_j + q_{j-1}^{(k-1)}\choose m_j}\\
&=&\frac{x_1^{n_1+2}}{x_0 x_2} \sum_{m_2,...,m_k} x_2^{-q_0^{(k-1)}}x_1^{q_1^{(k-1)}} \prod_{j=1}^{k-1} {m_{j+1} +q_j^{(k-1)}\choose m_{j+1}}\\
&=& \frac{x_1^{n_1+2}}{x_0 x_2} Z_{\ell, (n_2,...,n_k)}^{(k-1)}(x_1,x_2).
\end{eqnarray*}
(Here, the superscript $(k-1)$ on $q_0,q_1$ means we take these variables as defined for the $k-1$ variables with indices $2,...,k$.)
\end{proof}

Using the Lemma, by induction, we see that the generating function factorizes: 
\begin{eqnarray}\label{factora}
Z_{\ell,\bn}^{(k)}(x_0,x_1) &=& \frac{x_1 x_k^{\ell+1}}{x_0 x_{k+1}^{\ell+1}} \prod_{i=1}^k x_{i}^{n_i}.
\end{eqnarray}
In particular,
\begin{eqnarray}\label{factor}
Z_{\ell,\bn}^{(k)}(x_0,x_1) &=& Z_{0,(n_1,...,n_p)}^{(p-1)}(x_0,x_1) Z_{\ell,(n_{p+1},...,n_k)}^{(k-p+1)}(x_{p-1},x_{p}).
\end{eqnarray}
We are interested in the constant term in $x_1$ in $Z_{\ell,\bn}^{(k)}(x_0,x_1)$. 
We use the factorization Lemma for the first factor, and the definition via summation for the second factor:
\begin{eqnarray}\label{factorb}
Z_{\ell,\bn}^{(k)}(x_0,x_1) &=& \frac{x_1 x_{p-1}}{x_0 x_{p}} \prod_{j=1}^{p-1} x_j^{n_j}
\sum_{m_{p},...,m_k} x_{p}^{-q_{p-1}} x_{p-1}^{q_p} \prod_{j=p}^{k} {m_{j} + q_j\choose m_{j}}.
\end{eqnarray}

Suppose we restrict the summation in the second factor to $q_p\geq 0$ only. Moreover, we are interested in the generating function when $x_0=1$. In this case, all $x_i$ are polynomials in $x_1$ (Chebyshev polynomials of the second type). Terms in the summation in which $q_{p-1}<0$ are therefore products of polynomials in $x_1$ since there are no factors of $x_i$ in the denominator in this case. Moreover, there is an overall factor of $x_1$, so that there is no constant term in $x_1$ in this case. Thus, 
\begin{lemma}
If the summation over $(m_p,...,m_k)$ in \eqref{factorb}, is
restricted to $q_p\geq 0$, then only terms with $q_{p-1}\geq 0$
contribute to the constant term in $x_1$ when $x_0=1$.
\end{lemma}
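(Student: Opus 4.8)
The plan is to extract the constant term in $x_1$ directly from the factorized expression \eqref{factorb}, working term by term in the restricted sum over $(m_p,\dots,m_k)$ with $q_p\geq 0$. First I would set $x_0=1$ and invoke the fact recorded just above, that the $Q$-system solutions $x_i$ are then polynomials in $x_1$ (the Chebyshev polynomials of the second type). In particular no $x_i$ with $i\geq 1$ contributes a negative power of $x_1$ by itself, so the only possible source of a denominator in \eqref{factorb} is the explicit factor $1/(x_0 x_p)=1/x_p$ sitting in the prefactor $\frac{x_1 x_{p-1}}{x_0 x_p}$.

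Next I would fix a single term of the restricted sum and do the power-counting in $x_p$. The summand carries $x_p^{-q_{p-1}}$ and the prefactor carries $x_p^{-1}$, so the net exponent of $x_p$ in that term is $-q_{p-1}-1$. Under the hypothesis $q_{p-1}<0$ we have $-q_{p-1}\geq 1$, hence $-q_{p-1}-1\geq 0$: the lone denominator $x_p$ is completely cancelled and $x_p$ appears only to a nonnegative power. Since the restriction gives $q_p\geq 0$, the factor $x_{p-1}^{q_p}$ is likewise a genuine nonnegative power, the remaining factors $\prod_{j=1}^{p-1}x_j^{n_j}$ are polynomials in $x_1$ (here $n_j\geq 0$), and the binomial coefficient is a constant. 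Therefore the whole term is a polynomial in $x_1$, with no negative powers at all.

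Finally I would exploit the explicit factor $x_1$ in the prefactor: the term is $x_1$ times a polynomial in $x_1$, hence divisible by $x_1$, so its coefficient of $x_1^{0}$ vanishes. As this holds for every term of the restricted sum with $q_{p-1}<0$, and the constant-term operation is linear, none of these terms contribute to the constant term of $Z_{\ell,\bn}^{(k)}(x_0,x_1)$ at $x_0=1$; equivalently, only terms with $q_{p-1}\geq 0$ survive, which is the assertion of the lemma.

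The step carrying the real content is the exact bookkeeping of the $x_p$-exponent combined with the polynomiality of the $Q$-system solutions at $x_0=1$ (which ultimately rests on the Turán-type identity $x_i^2-x_{i+1}x_{i-1}=1$). It is precisely the strict inequality $q_{p-1}<0$ that turns the single denominator $x_p$ into a nonnegative power; when instead $q_{p-1}\geq 0$ the net exponent $-q_{p-1}-1$ is strictly negative, so $1/x_p$ survives as a genuine rational function of $x_1$ whose expansion around $x_1=\infty$ can, and in general does, produce a constant term. The one remaining point to justify is that the term-by-term argument is legitimate, i.e. that for each fixed power of $x_1$ only finitely many terms of the restricted sum contribute, so that the constant term of the sum equals the sum of the constant terms; this follows from the degree bounds $\deg_{x_1}x_i=i$, but deserves an explicit remark.
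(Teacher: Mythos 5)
Your proof is correct and follows essentially the same route as the paper's: with $x_0=1$ the $Q$-system solutions $x_i$ are polynomials in $x_1$, the hypotheses $q_p\geq 0$ and $q_{p-1}<0$ make the net powers of $x_{p-1}$ and $x_p$ nonnegative so each such term is a polynomial in $x_1$, and the explicit prefactor $x_1$ then kills its constant term. Your closing remark on interchanging constant-term extraction with the (infinite) restricted sum is a legitimate fine point that the paper passes over silently, but it does not alter the argument.
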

We use an induction argument, where the base step is clear
($q_k=\ell$), to conclude that the only terms which contribute to the
constant term in $x_1$ are terms from the restricted summation,
$q_i\geq 0$ ($i>0$). When $q_0=0$, this is the $N=M$ identity, since
$q_i=P_i$ in that case.

\subsection{The simply-laced case}
This case is a straightforward generalization of the $\sl_2$ case\footnote{Below, we have two sets of indices for $\bx$, $\bn$ etc. When we write
$\bx_0$ we mean the collection of $r$ elements
$(x_{1,0},...,x_{r,0})$, and so forth.}.

We now define the generating function
$$
Z_{\lambda,\bn}^{(k)}(\bx_0,\bx_1)= \sum_{\bm} \bx_0^{-\bq_1} \bx_1^{\bq_0} \prod_{\al,j} {m_{j}\sal+q_{j}\sal\choose m_{j}\sal},
$$
(as is the norm, when $\bx$ and $\bq$ represent vectors indexed by the
same set, we write $\bx^\bq$ for the product over the components.)
Here, $\lambda = \sum_{\al=1}^r \ell\sal \omega_\al$,
$\bn=(n_{j}\sal)_{\al\in I_r,j\in I_k}$, the summation is over $\bm =
\{m_{j}\sal,\ \al\in I_r, j\in I_k\}$ non-negative integers, and we
define
$$
q_{i}\sal = \ell\sal + \sum_{j=i+1}^k \sum_{\beta\in I_r} (j-i) (C_{\al,\beta} m_{j}\sbe-\delta_{\al,\beta} n_{j}\sbe).
$$
When $q_{\al,0}=0$ for all $\al$, this corresponds to the ``weight
restriction" \eqref{weight} in the $M$ and $N$-sums, and in that case,
$q_{i}\sal=P_{i}\sal$ if $i>0$. We have now $2r$ variables $\bx_0 =
(x_{1,0},...,x_{r,0})$ and $\bx_1=(x_{1,1},...,x_{r,1})$. The
generating function is related to the $M$ or $N$-sums when we evaluate
the sum
at $x_{\al,0}=1$ and consider the constant term in $\bx_{1}$.

Following the steps outlined for $\sl_2$ we derive a recursion relation for the generating function:
$$
Z_{\lambda,\bn}^{(k)}(\bx_0,\bx_1) =\frac{\bx_{1}^{2+\bn_{1}}}{\bx_{0}\bx_{2}}Z_{\lambda,\bn^{(k-1)}}^{(k-1)}(\bx_1,\bx_2)
$$
where $\bn^{(k-1)}$ is $\bn$ with $\bn_{1}=\boldsymbol 0$. Here, we
have defined $x_{\al,i}$ to be the solutions of the following system:
$$
x_{\al,i+1} x_{\al,i-1} = x_{\al,i}^2 - \prod_{C_{\al,\beta}=-1} x_{\beta,i}.
$$
This is called the $Q$-system for the simply-laced Lie algebra $\g$,
provided we set the initial conditions $x_{\al,0}=1$. Otherwise it is
a cluster algebra mutation\cite{Ke}, and therefore, under these special initial
conditions, all its solutions are polynomials in the variables
$x_{\beta,1}$\cite{DFK2}.

We again repeat the arguments of the previous section to factorize the
generating function:
$$
Z_{\lambda,\bn}^{(k)}(\bx_0,\bx_1) = \prod_\al \frac{x_{\al,1} x_{\al,k}^{\ell_\al+1}}{x_{\al,0}x_{\al,k+1}^{\ell_\al+1}}
\prod_{j=1}^k x_{\al,j}^{n\sal_{j}},
$$
from which we deduce that
$$
Z_{\lambda,\bn}^{(k)}(\bx_0,\bx_1) = \frac{\bx_{1} \bx_{{p-1}}}{\bx_0\bx_{p}}
\prod_{j=1}^{p-1} \bx_{j}^{\bn_{j}}
\sum_{\bm^{(p)}} \bx_p^{-\bq_{p-1}} \bx_{p-1}^{\bq_p} \prod_{j= p}^k {\bm_{j}+\bq_{j}\choose \bq_{j}}.
$$
Here, $\bm^{(p)}$ are the last $k-p+1$ components of the list
$(\bm_1,...,\bm_k)$. A binomial coefficient with vector entries is
notation for the product of binomial coefficients over the components.
 
Suppose we restrict the summation to $\bm^{(p)}$ such that
$q\sal_{p}\geq 0$ for some $\al$, and such that $q\sal_{p-1}<0$ for
the same $\al$. We look for a contribution to the constant term in
$x_{\al,1}$. All $\bx_i$ are polynomials in $\bx_1$ after evaluation
at $x_{\al,0}=1$ for all $\al$. Terms with $q\sal_{p}<0$ do not have a
factor $x_{\al,p}$ in the denominator, and are therefore polynomials
in $x_{\al,i}$ for several $i$ and fixed $\al$. One can show that
$\prod_{\beta\neq \al} x_{\beta,p}^{-1}$ has no negative powers of
$x_{\al,1}$ (see \cite{DFK}, Lemma 4.8).  Therefore we have a
polynomial in $x_{\al,1}$, with an overall power of $x_{\al,1}$, hence
there is no constant term in $x_{\al,1}$.  We repeat this argument for
each $\al$ and inductively for each $p$ starting from $p=k$, until we
get
\begin{lemma}
There is no contribution to the constant term in $\bx_1$ in the
summation from terms with $q\sal_{j}<0$ for any $p,j$, hence from
terms with $P_{j}\sal<0$ when we consider the terms with $q_{0}\sal=0$.
\end{lemma}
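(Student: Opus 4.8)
The plan is to run the $\sl_2$ argument of the previous subsection one simple root at a time, packaged as a descending induction on $p=k,k-1,\dots,1$. The base case $p=k$ is automatic, since $q_k\sal=\ell\sal\geq 0$ for every $\al$, so no terms are excluded at the top level. At a general stage I may assume the summation has already been restricted to $q_p\sal\geq 0$ for all $\al$ (automatically at $p=k$, and as the output of the stage at $p+1$ otherwise), and the goal is to show that any surviving term with $q_{p-1}\sal<0$ for some $\al$ fails to contribute to the constant term in $\bx_1$. This is the exact analogue of the $\sl_2$ lemma, which passed the restriction from $q_p\geq 0$ down to $q_{p-1}\geq 0$.

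Fix such a root $\al$ and invoke the factorization of $Z_{\lambda,\bn}^{(k)}$, with all $x_{\beta,0}$ set to $1$; under these initial conditions every $x_{\beta,i}$ is a polynomial in the variables $\bx_1$ \cite{DFK2}. I track only the exponent of $x_{\al,1}$. The $\al$-component of the summand factor $\bx_p^{-\bq_{p-1}}$ is $x_{\al,p}^{-q_{p-1}\sal}$, a strictly positive power of $x_{\al,p}$ because $q_{p-1}\sal<0$; absorbing the $x_{\al,p}^{-1}$ from the prefactor leaves $x_{\al,p}$ with a non-negative exponent, so this factor is a polynomial power of the polynomial $x_{\al,p}$ and contributes only non-negative powers of $x_{\al,1}$. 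The remaining pieces built from non-negative exponents---the numerators $\bx_{p-1}$ and $\prod_{j=1}^{p-1}\bx_j^{\bn_j}$, and the summand factor $\bx_{p-1}^{\bq_p}$, legitimate because $\bq_p\geq 0$ componentwise at this stage---likewise contribute only non-negative powers of $x_{\al,1}$, while the explicit $\bx_1$ in the prefactor supplies an overall factor $x_{\al,1}^{1}$.

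The genuinely new difficulty, absent in $\sl_2$, is the off-diagonal coupling of the $Q$-system. After the bookkeeping above, the only factor that might still carry a negative power of $x_{\al,1}$ is the cross-root denominator $\prod_{\beta\neq\al}x_{\beta,p}^{-1}$ coming from $\bx_p^{-1}$ in the prefactor. Were such a negative power present, it could cancel the overall $x_{\al,1}^{1}$ and manufacture a spurious constant term. Ruling this out is the crux: one must prove that $\prod_{\beta\neq\al}x_{\beta,p}^{-1}$ contains no negative power of $x_{\al,1}$, which is exactly \cite{DFK}, Lemma~4.8. This is the step I expect to demand the most work, since it is where the precise form of the relation $x_{\al,i+1}x_{\al,i-1}=x_{\al,i}^2-\prod_{C_{\al,\beta}=-1}x_{\beta,i}$, and the resulting interlocking polynomiality of the $x_{\beta,i}$ in $\bx_1$, must be exploited.

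Granting Lemma~4.8, the exponent of $x_{\al,1}$ in the term is at least $1$, so it has no constant part in $x_{\al,1}$ and hence none in $\bx_1$. Applying this for each offending root $\al$ discards all terms with $q_{p-1}\sal<0$, which upgrades the restriction to $q_{p-1}\sal\geq 0$ for all $\al$ and feeds the induction down to $p-1$. Descending to $p=1$ leaves only terms with $q_j\sal\geq 0$ for all $j>0$ and all $\al$ in the constant term, and specializing to $q_0\sal=0$---where $q_j\sal=P_j\sal$---this is precisely the vanishing of all contributions with $P_j\sal<0$, i.e.\ the $M=N$ identity.
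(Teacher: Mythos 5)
Your proposal is correct and follows essentially the same route as the paper: the same factorization of $Z_{\lambda,\bn}^{(k)}$, the same bookkeeping showing that a term with $q_p\sal\geq 0$ and $q_{p-1}\sal<0$ carries only non-negative powers of $x_{\al,1}$ times the overall factor $x_{\al,1}$, the same appeal to Lemma~4.8 of the $M=N$ paper for the cross-root denominators $\prod_{\beta\neq\al}x_{\beta,p}^{-1}$, and the same descending induction on $p$ starting from $q_k\sal=\ell\sal\geq 0$. If anything, your formulation of the inductive hypothesis (all $q_p\sal\geq 0$ componentwise, which is what legitimizes the factor $\bx_{p-1}^{\bq_p}$ as polynomial) is stated slightly more carefully than in the paper, but it is the intended argument.
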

This implies that $M=N$ for the simply-laced Lie algebras.

\subsection{The non-simply laced case}
This case is less elegantly derived, as it requires the introduction of
even more variables in the generating function, and each case must be
treated separately. Nevertheless, the argument goes through in the
same (more involved) manner. In the process we must define the set of
variables which satisfy the following system of equations:
$$
x_{\al,i+1}x_{\al,i-1} = x_{\al,i} - \prod_{\beta:C_{\al,\beta}<0}
\prod_{j=0}^{-C_{\al,\beta}-1}
x_{\beta,\lfloor|(C_{\beta,\alpha|}i+j)/|C_{\al,\beta}|\rfloor}. 
$$
If $x_{\al,0}=1$ for all $\al$, then the equation for $i>0$ is known
as the $Q$-system (for the simple Lie algebra with Cartan matrix $C$),
and it is known to be satisfied by the characters $x_{\al,i}$ of the
KR-modules $\KR_{\al,i}$ if $x_{\al,1}$ is the character of the
fundamental module.

We find in the $k\to\infty$ limit that
\begin{theorem}\cite{DFK}
For any simple Lie algebra and $\lambda$ a dominant weight, $\bn$ a
vector in $\Z_+^{r\times k}$, $M_{\lambda,\bn}=N_{\lambda,\bn}$.
\end{theorem}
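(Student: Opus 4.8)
The plan is to reduce the $M=N$ identity, for fixed $\lambda$ and $\bn$, to a statement about a single constant term of a two-variable generating function whose building blocks are solutions of the $Q$-system. Since the $M$-sum differs from the $N$-sum only by the restriction $P_j\sal\geq 0$, it suffices to show that, once the weight restriction $q_0\sal=0$ is imposed, every term of the $N$-sum indexed by an $\bm$ with some negative vacancy number $P_j\sal<0$ makes no contribution. Concretely I would introduce the generating function $Z_{\lambda,\bn}^{(k)}(\bx_0,\bx_1)$ exactly as above, in which the auxiliary quantities $x_{\al,i}$ are treated as formal data satisfying the $Q$-system recursion with initial condition $x_{\al,0}=1$. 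The role of the extra variables $\bx_0$ and the running index $i$ is that the sign carried by a binomial coefficient with negative upper argument is recorded as an honest power of the $x_{\al,i}$, so that the $N$-sum is recovered as the constant term in $\bx_1$ evaluated at $x_{\al,0}=1$, while the $M$-sum is the same constant term taken over the restricted index set.

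Next I would prove the recursion lemma that sums out the variables $\bm_1$ using the elementary identity $\sum_{m\geq 0} x^{-2m}{m+q \choose m}=\left(\frac{x^2}{x^2-1}\right)^{q+1}$ and then rewriting $x^2-1$ via the $Q$-system as $x_{\al,0}x_{\al,2}$ in the simply-laced case (and its floor-function analogue otherwise). Iterating this lemma yields the product factorization of $Z_{\lambda,\bn}^{(k)}$, and from the factorization one obtains the split at an arbitrary position $p$, isolating a tail summation over $\bm^{(p)}$. The crucial structural input is that, after setting $x_{\al,0}=1$ for all $\al$, every $x_{\al,i}$ becomes a \emph{polynomial} in the variables $\bx_1$; this is the polynomiality (Laurent phenomenon) for these special initial conditions of the cluster-algebra mutation, which is available for all simple types.

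The heart of the argument is then the following cancellation. Restricting the tail sum to indices with $q_p\sal\geq 0$, I would show that a term with $q_{p-1}\sal<0$ carries no denominator factor $x_{\al,p}$ and is therefore a polynomial in $x_{\al,1}$, and moreover always carries an overall positive power of $x_{\al,1}$ from the prefactor; combined with the fact that $\prod_{\beta\neq\al} x_{\beta,p}^{-1}$ contributes no negative powers of $x_{\al,1}$ (the analogue of Lemma 4.8 of \cite{DFK}), such a term contributes nothing to the constant term in $x_{\al,1}$. Running this inductively in $p$ from $p=k$ downward, with base case $q_k\sal=\ell\sal\geq 0$, forces only the fully restricted indices $q_j\sal\geq 0$ to survive; setting $q_0\sal=0$ identifies $q_j\sal$ with $P_j\sal$ and gives $M_{\lambda,\bn}=N_{\lambda,\bn}$.

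The main obstacle I anticipate is the non-simply-laced case. There the $Q$-system acquires the floor-function structure displayed above, so the clean pairing $x^2-1=x_0x_2$ used to sum out $\bm_1$ no longer holds uniformly and the auxiliary indices shift irregularly from root to root. I expect the summation lemma, the factorization, and especially the ``no negative powers of $x_{\al,1}$'' input to have to be re-established type by type, the bookkeeping of indices being the real source of difficulty rather than any conceptual gap. Once these type-dependent lemmas are in place, the inductive constant-term argument proceeds exactly as in the simply-laced case, and taking $k\to\infty$ removes the auxiliary truncation to yield $M_{\lambda,\bn}=N_{\lambda,\bn}$ for every simple $\g$.
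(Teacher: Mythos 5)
Your proposal is correct and follows essentially the same route as the paper: the same generating function $Z_{\lambda,\bn}^{(k)}(\bx_0,\bx_1)$, the same summation-and-$Q$-system recursion lemma leading to the factorization, the same polynomiality (cluster algebra) input at $x_{\al,0}=1$, the same constant-term cancellation via the analogue of Lemma 4.8 of \cite{DFK}, the same downward induction in $p$ with base case $q_k\sal=\ell\sal\geq 0$, and the same type-by-type handling of the non-simply-laced case followed by the $k\to\infty$ limit. No substantive differences or gaps.
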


\section{Summary}
Prior to the work described in the previous section, it was known that for any simple Lie algebra, the multiplicity of the $U_q(\g)$-module with $\g$ highest weight $\lambda$ in the tensor product of Kirillov-Reshetikhin modules is the $N$-sum formula. This followed theorems of Hatayama et al\cite{HKOTY} and Nakajima's theorem about the $q$-characters for $T$-systems corresponding to simply-laced Lie algebras\cite{Nakajima}, as well as the extension by Hernandez for other algebras\cite{Hernandez}.

We now have all the equalities in the pentagon of identities. That is, since we have proven that $M=N$, we have proven also the following:
\begin{corollary}
The multiplicity of the $\g$-module $V(\lambda)$ in the tensor product of Chari's KR modules of $\g[t]$ is equal to the multiplicity of the $U_q(\g)$ module with $\g$-highest weight $\lambda$ in the corresponding tensor product of $U_q(\widehat{\g})$ of Kirillov-Reshetikhin type.
\end{corollary}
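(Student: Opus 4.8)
The plan is to close the pentagon of relations assembled in the previous sections into a single chain running from $M^{Y}_{\lambda,\bn}$ back to itself, and then to squeeze. Here I write $M^{Y}_{\lambda,\bn}$ for the multiplicity of the $U_q(\g)$-module of highest weight $\lambda$ in the tensor product of the (Yangian or, equivalently, quantum affine) KR-modules, and $M^{\g[t]}_{\lambda,\bn}$ for the multiplicity of $V(\lambda)$ in the tensor product of Chari's $\g[t]$-modules; the corollary asserts that these two numbers coincide.

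First I would record the four ingredients already in hand. The deformation argument of the introduction gives $M^{Y}_{\lambda,\bn}\leq M^{\g[t]}_{\lambda,\bn}$, and a second deformation argument comparing Chari's tensor product with its associated graded fusion product gives $M^{\g[t]}_{\lambda,\bn}\leq \dim\Hom_\g(\mathcal F^*_\bn,V(\lambda))$. The main Theorem bounds the $q$-dimension of $\Hom_\g(\mathcal F^*_\bn,V(\lambda))$ above by $M_{\lambda,\bn}(q)$, so specializing $q\to 1$ yields $\dim\Hom_\g(\mathcal F^*_\bn,V(\lambda))\leq M_{\lambda,\bn}$. Finally the $N$-sum theorem, via the $T$- and $Q$-systems, supplies $M^{Y}_{\lambda,\bn}=N_{\lambda,\bn}$, and the combinatorial $M=N$ identity of the previous section supplies $M_{\lambda,\bn}=N_{\lambda,\bn}$. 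Concatenating these I obtain
\begin{eqnarray*}
M^{Y}_{\lambda,\bn}&\leq& M^{\g[t]}_{\lambda,\bn}\leq \dim\Hom_\g(\mathcal F^*_\bn,V(\lambda))\\
&\leq& M_{\lambda,\bn}=N_{\lambda,\bn}=M^{Y}_{\lambda,\bn}.
\end{eqnarray*}
Since the first and last terms of the chain coincide, every inequality collapses to an equality; in particular $M^{\g[t]}_{\lambda,\bn}=M^{Y}_{\lambda,\bn}$, which is exactly the claimed equality of multiplicities.

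The assembly above is purely formal once the links are in place, so the real obstacle is the single equality $M_{\lambda,\bn}=N_{\lambda,\bn}$ that forces the two ends of the chain to agree. This is the ``final step'' of the pentagon: absent it, one has only a chain of inequalities whose endpoints are \emph{a priori} unrelated, and nothing can be concluded. All the genuine content is therefore carried by the combinatorial proof of the $M=N$ identity through the $Q$-system/cluster-algebra recursion of the previous section, whose crux is to show that the terms with $P_j\sal<0$ contribute no constant term in the generating-function variables $\bx_1$, thereby removing the positivity restriction on the vacancy numbers without altering the constant term.
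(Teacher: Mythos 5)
Your proposal is correct and follows exactly the paper's own argument: the corollary is obtained by closing the pentagon of inequalities with the combinatorial $M=N$ identity, so that the chain $M^{Y}_{\lambda,\bn}\leq M^{\g[t]}_{\lambda,\bn}\leq \dim\Hom_\g(\mathcal F^*_\bn,V(\lambda))\leq M_{\lambda,\bn}=N_{\lambda,\bn}=M^{Y}_{\lambda,\bn}$ forces every inequality to be an equality. Your identification of the $M=N$ identity as the sole nontrivial ingredient matches the paper's ``final step'' remark precisely.
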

\begin{corollary}
The Hilbert polynomial of the graded multiplicity space of $V(\lambda)$ in the Feigin-Loktev fusion product is the fermionic $M$-sum (generalized Kostka polynomials in the case of $A_n$). This is the Feigin-Loktev conjecture.
\end{corollary}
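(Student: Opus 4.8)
The plan is to close the pentagon of Section 1 by feeding the graded upper bound of the main Theorem of Section 3 into the numerical chain of deformation inequalities, using the HKOTY theorem and the $M=N$ theorem of Section 4 to pin the two ends of that chain together, and then promoting the resulting equality of dimensions to an equality of Hilbert polynomials by positivity.

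First I would isolate the only step that carries $q$-information. Set $H(q) := q\hbox{-}\dim \Hom_\g(\mathcal F^*_\bn, V(\lambda))$, the Hilbert polynomial of the graded multiplicity space in question. By the main Theorem its coefficients are bounded, degree by degree in $q$, by those of $M_{\lambda,\bn}(q)$; this is genuinely a term-by-term bound because $\mathcal F$ was built from $q$-binomial coefficients, so $M_{\lambda,\bn}(q)$ has non-negative coefficients. All the remaining inputs are statements about ordinary dimensions, i.e.\ specializations at $q=1$.

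Next I would assemble the numerical chain at $q=1$. The two deformation arguments of Section 1 give
\begin{eqnarray*}
N_{\lambda,\bn}
&=& \dim\Hom_\g\Bigl(\otimes_{\al,m} \KR_{\al,m}^{\otimes n_m\sal}, V(\lambda)\Bigr) \\
&\leq& \dim\Hom_\g\Bigl(\otimes_{\al,m} C_{\al,m}^{\otimes n_m\sal}, V(\lambda)\Bigr) \\
&\leq& \dim\Hom_\g(\mathcal F^*_\bn, V(\lambda)) = H(1),
\end{eqnarray*}
where the first equality is the HKOTY--Nakajima--Hernandez identification of the $U_q(\widehat{\g})$ tensor multiplicity with $N_{\lambda,\bn}$, the first inequality is the $q\to1$ bound $M^Y \leq M^{\g[t]}$, and the second is the bound relating the tensor product of Chari modules to its associated graded fusion product. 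At the other end, specializing the main Theorem at $q=1$ gives $H(1) \leq M_{\lambda,\bn}(1)$, and the $M=N$ theorem of Section 4 gives $M_{\lambda,\bn}(1) = N_{\lambda,\bn}$. Concatenating, $N_{\lambda,\bn} \leq H(1) \leq M_{\lambda,\bn}(1) = N_{\lambda,\bn}$, so every link is an equality; in particular $H(1) = M_{\lambda,\bn}(1)$.

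Finally I would run the positivity upgrade. Writing $H(q) = \sum_i a_i q^i$ and $M_{\lambda,\bn}(q) = \sum_i b_i q^i$ with all $a_i, b_i \geq 0$, the term-by-term bound reads $a_i \leq b_i$ for every $i$, while the equality at $q=1$ reads $\sum_i a_i = \sum_i b_i$; together these force $a_i = b_i$ for all $i$, hence $H(q) = M_{\lambda,\bn}(q)$ as polynomials. Since the main Theorem's bound is independent of the localization parameters $\zeta_j$, so is this equality, which is the Feigin-Loktev conjecture; in type $A_n$ the right-hand side is the generalized Kostka polynomial. The main obstacle is precisely this last passage from a $q=1$ identity to an identity of graded dimensions: it is legitimate only because the upper bound is term-by-term and both sides have non-negative coefficients, so that matching the total mass forces each graded component to match. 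Everything else is the bookkeeping of the pentagon, whose two substantive inputs --- the graded bound and the combinatorial $M=N$ identity --- are exactly the theorems established earlier.
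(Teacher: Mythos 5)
Your proposal is correct and is essentially the paper's own argument: the paper closes its ``pentagon'' by exactly this chain of deformation inequalities pinned at both ends by the HKOTY--Nakajima--Hernandez identification and the $M=N$ theorem, and then invokes the same positivity upgrade (coefficient-wise bound from the main Theorem plus equality at $q=1$ forces equality of polynomials). The only difference is presentational: you spell out the term-by-term argument that the paper compresses into its remark following Equation \eqref{fusion}.
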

\begin{corollary}
The Bethe integer sets (parametrizing Bethe vectors) in the generalized Heisenberg model as solved by Kirillov and Reshetikhin are in bijection with vectors in the Hilbert space of the model, and therefore the completeness conjecture holds.
\end{corollary}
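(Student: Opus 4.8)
The plan is to recognize the completeness statement as a single numerical equality supplemented by a linear-independence input, and then to close that equality using the two long edges of the pentagon. Completeness asserts that, for each $\g$-highest weight $\lambda$, the Bethe vectors of weight $\lambda$ form a basis of the space of $\g$-highest weight vectors of that weight in the Hilbert space $\cH_\bn=\prod_{\al,m}\KR_{\al,m}^{\otimes n_{m}\sal}$. By the Kirillov-Reshetikhin construction of the fermionic formula, the admissible Bethe integer sets of weight $\lambda$ are precisely the configurations $\bm$ surviving in \eqref{Msum}, so their number is $M_{\lambda,\bn}=\lim_{q\to1}M_{\lambda,\bn}(q)$; on the other hand the dimension of the highest-weight space is $M_{\lambda,\bn}^{Y}=\dim\Hom_{U_q(\g)}(\otimes_{\al,m}\KR_{\al,m}^{\otimes n_{m}\sal},V(\lambda))$. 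Thus the heart of the statement is the identity $M_{\lambda,\bn}=M_{\lambda,\bn}^{Y}$.

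First I would obtain this identity by composing the two equalities already in hand. The right edge of the pentagon, $M_{\lambda,\bn}^{Y}=N_{\lambda,\bn}$, is the HKOTY formula \eqref{Nsum}, valid because the KR-characters satisfy the $Q$-system; this in turn follows from the $T$-system proved by Nakajima in the simply-laced case and by Hernandez in general. The other edge, $M_{\lambda,\bn}=N_{\lambda,\bn}$, is exactly the $M=N$ theorem established in the previous section by the generating-function and $Q$-system argument. Chaining the two yields $M_{\lambda,\bn}=M_{\lambda,\bn}^{Y}$ immediately.

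Second, with the two counts matched, I would invoke the standard Bethe Ansatz input that for generic spectral parameters the Bethe vectors attached to distinct admissible integer sets are nonzero and linearly independent highest-weight vectors. This gives an injection of the set of admissible integer sets of weight $\lambda$ into the highest-weight space; since its source has cardinality $M_{\lambda,\bn}$ and its target has dimension $M_{\lambda,\bn}^{Y}$, the equality of these two numbers upgrades the injection to a bijection, which is precisely the completeness assertion.

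I expect the main obstacle to lie not in the numerical bookkeeping, which the pentagon supplies in full, but in the analytic, genericity-dependent claim that the Bethe vectors are genuinely distinct, nonvanishing, and independent. This is the one place where the combinatorial and representation-theoretic identities must be reconciled with the spectral theory of the transfer matrix: the identity $M_{\lambda,\bn}=M_{\lambda,\bn}^{Y}$ certifies that the proposed counting is consistent with completeness, but on its own it does not rule out degeneracies among Bethe vectors, so the rigorous verification of that independence is the delicate step.
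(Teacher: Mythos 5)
Your numerical core is exactly the paper's argument: the corollary is stated there as an immediate consequence of closing the pentagon, namely chaining the combinatorial theorem $M_{\lambda,\bn}=N_{\lambda,\bn}$ of Section 4 with the already-established identity $N_{\lambda,\bn}=M^{Y}_{\lambda,\bn}$ (HKOTY together with the $Q$-system, which follows from the $T$-system results of Nakajima and Hernandez), so that the number of admissible Bethe integer sets equals the dimension of the highest-weight multiplicity space $\Hom_{U_q(\g)}\bigl(\otimes_{\al,m}\KR_{\al,m}^{\otimes n_{m}\sal},V(\lambda)\bigr)$.

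Where you diverge is your second step. The paper makes no use of, and does not claim, non-vanishing or linear independence of analytically constructed Bethe eigenvectors; immediately after the corollary it remarks that ``not all Bethe states come from the so-called `string hypothesis' '' and that the theorem ``gives a good counting of the states.'' In other words, the completeness statement being proven is the combinatorial one: the cardinality of the set of Bethe integer sets equals the dimension of the space of $\g$-highest weight vectors of weight $\lambda$ in $\cH_\bn$, so the bijection is one of enumeration against any basis of that space, not an assertion that the transfer-matrix eigenvectors attached to string solutions are independent and span. The ``delicate step'' you flag is therefore not a gap in the paper's proof --- it lies outside the statement as intended. Conversely, under your stronger reading the proposal (as you yourself concede) does not close that step, and no argument in this paper could: the literal string hypothesis is known to fail, which is precisely why the paper retreats to the counting formulation. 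So your proof matches the paper's for the statement as meant; the independence requirement you append is a strengthening the paper deliberately avoids.
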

We should remark that although it is well known that not all Bethe states come from the so-called ``string hypothesis" in these models, nevertheless this gives a good counting of the states.

The proof described in the previous section shows that the vanishing
of the ``non-positive'' components of the $N$-sum formula is due to
the fact that the solutions of the $Q$-system with the KR-boundary
condition are polynomials in the initial data $x_{\al,1}$. This fact
is clear, once one refers to the theorem that the solutions
$x_{\al,n}$ with $n>0$ are characters of KR-modules, which are in the
Grothendieck group generated by $\{x_{1,1},...,x_{r,1}\}$ (the
characters of the $r$ fundamental representations). However these
facts are not immediately obvious without resorting to the proven
theorems on the subject.  The cluster algebra formulation of the
$Q$-system gives an entirely
combinatorial interpretation for this fact \cite{Ke,DFK2}.

The polynomiality property is quite general for a much larger class of
cluster algebras, under even more general boundary conditions, which
give a certain vanishing of the numerators \cite{DFK2}. For example,
the same property holds for the $T$-systems. 

\section{Acknowlegments} 
I am deeply indebted to T. Miwa for so generously sharing his
knowledge and experience over the span of many years. I also thank my collaborators in completing the series of proofs, Eddy Ardonne and Philippe Di Francesco. This work is
supported by NSF grant DMS-0802511.

\bibliographystyle{ws-procs9x6}
\bibliography{refs}

\end{document}